\theoremstyle{plain}
\newtheorem{teo}{Theorem}[section]
\newtheorem{lemma}[teo]{Lemma}
\newtheorem{prop}[teo]{Proposition}
\newtheorem{cor}[teo]{Corollary}
\newtheorem{ackn}{Acknowledgments\!}
\theoremstyle{definition}
\theoremstyle{remark}
\newtheorem{rem}[teo]{Remark}
\numberwithin{equation}{section}
\def\rd{\overset{\circ}{Ric}}
\def\rdc{\overset{\circ}{R}}
\def\kn{\mathbin{\bigcirc\mkern-15mu\wedge}}
\def\SS{{{\mathbb S}}}
\def\RR{{\mathbb R}}
\def\e{{\varepsilon}}
\def\eps{\varepsilon}
\title[Rigidity of positively curved shrinking Ricci solitons in dimension four]{Rigidity of positively curved\\shrinking Ricci solitons in dimension four}
\author[Giovanni Catino]{Giovanni Catino}
\address[Giovanni Catino]{Dipartimento di Matematica, Politecnico di Milano, Piazza Leonardo da Vinci 32, 20133 Milano, Italy}
\email[]{giovanni.catino@polimi.it}
\date{\today}
\begin{document}

\begin{abstract} 
We classify four-dimensional shrinking Ricci solitons satisfying $Sec \geq \frac{1}{24} R$, where $Sec$ and $R$ denote the sectional and the scalar curvature, respectively. They are isometric to either $\mathbb{R}^{4}$ (and quotients), $\mathbb{S}^{4}$, $\mathbb{RP}^{4}$ or $\mathbb{CP}^{2}$ with their standard metrics.
\end{abstract}

\maketitle

\begin{center}

\noindent{\it Key Words: Ricci solitons, Einstein metrics, positive sectional curvature}

\medskip

\centerline{\bf AMS subject classification:  53C24, 53C25}

\end{center}

\

\

\section{Introduction}

In this paper we investigate gradient shrinking Ricci solitons with positive sectional curvature. We recall that a Riemannian manifold $(M^n ,g)$ of dimension $n\geq 3$ is a gradient Ricci soliton if there exists a smooth function $f$ on $M^n$ such that 
$$
Ric + \nabla^2 f  =  \lambda\, g
$$ 
for some constant $\lambda$. If $\nabla f$ is parallel, then $(M^n,g)$ is Einstein. The Ricci soliton is called  shrinking if $\lambda>0$, steady if $\lambda=0$ and  expanding if $\lambda<0$. Ricci solitons generate self-similar solutions of the Ricci flow, play a fundamental role in the formation of singularities and have been studied by many authors (see H.-D. Cao~\cite{cao2} for an overview).

It is well known that (compact) Einstein manifolds can be classified, if they are enough positively curved. Sufficient conditions are non-negative curvature operator (S. Tachibana \cite{tachib1}), non-negative isotropic curvature (M. J. Micallef and Y. Wang \cite{micwan} in dimension four and S. Brendle \cite{bre} in every dimension) and weakly $\frac14$-pinched sectional curvature \cite{berger} (if $Sec$ and $R$ denote the sectional and the scalar curvature, respectively, this condition in dimension four is implied by $Sec \geq \frac{1}{24}R$). Moreover, in dimension four, it is proved by D. Yang \cite{yang}) that  four-dimensional Einstein manifolds satisfying $Sec \geq  \eps R$ are isometric to either $\mathbb{S}^{4}$, $\mathbb{RP}^{4}$ or $\mathbb{CP}^{2}$ with their standard metrics, if $\eps=\frac{\sqrt{1249}-23}{480}$. The lower bound has been improved to $\eps=\frac{2-\sqrt{2}}{24}$ by E. Costa \cite{costa} and, more recently, to $\eps=\frac{1}{48}$ by  E. Ribeiro \cite{rib} (see also X. Cao and P. Wu \cite{xcaowu}). It is conjectured in \cite{yang} that the result should be true assuming positive sectional curvature.

In dimension $n\leq 3$, complete shrinking Ricci solitons are classified. In the last years there have been a lot of interesting results concerning the classification of shrinking Ricci solitons which are positively curved. For instance, it follows by the work of C. B\"ohm and B. Wilking~\cite{bohmwilk} that the only compact shrinking Ricci solitons with positive (two-positive) curvature operator are quotients of $\SS^{n}$. In dimension four, A. Naber \cite{naber} classified complete shrinkers with non-negative curvature operator. Four dimensional shrinkers with non-negative isotropic curvature were classified by X. Li, L. Ni and K. Wang \cite{lnw}. 

Recently, O. Munteanu and J.P. Wang \cite{munwang} showed that every complete shrinking Ricci solitons with positive sectional curvature are compact. It is natural to ask the following question: {\em given $\eps>0$, are there four dimensional non-Einstein shrinking Ricci solitons satisfying $Sec \geq \eps R$?}.

\

In this paper we give an answer to this question proving the following

\begin{teo}\label{t-main} Let $(M^{4},g)$ be a four-dimensional complete gradient shrinking Ricci soliton with $Sec\geq\frac{1}{24} R$. Then $(M^{4},g)$ is necessarily Einstein, thus isometric to either $\RR^{4}$ (and quotients), $\SS^{4}$, $\mathbb{RP}^{4}$ or $\mathbb{CP}^2$ with their standard metrics.
\end{teo}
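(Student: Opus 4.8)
The plan is to show that the pinching forces the soliton to be Einstein, after which the classification follows from the known Einstein rigidity results. I would first dispose of the degenerate case. By B.-L. Chen's theorem $R\geq 0$ on any complete shrinker, and the trace identity $\Delta_f R=2\lambda R-2|\mathrm{Ric}|^2$ (with $\Delta_f:=\Delta-\langle\nabla f,\nabla\,\cdot\,\rangle$) combined with the strong maximum principle shows that if $R$ vanishes at one point then $R\equiv 0$, hence $\mathrm{Ric}\equiv 0$; a Ricci-flat shrinker satisfies $\nabla^2 f=\lambda g$ and is therefore the flat Gaussian soliton on $\RR^4$ (and quotients). Otherwise $R>0$ everywhere, so $Sec\geq\frac{1}{24}R>0$, and by Munteanu--Wang \cite{munwang} the manifold $M^4$ is compact. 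The entire problem thus reduces to proving that a \emph{compact} positively curved shrinker is Einstein.

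The engine I would use is a weighted Bochner formula for the traceless Ricci tensor $\rd_{ij}=R_{ij}-\tfrac14 R\,g_{ij}$. Starting from the standard soliton identity $\Delta_f R_{ij}=2\lambda R_{ij}-2R_{ikjl}R^{kl}$ and subtracting its trace, one gets, after contracting with $\rd$,
\[
\tfrac12\Delta_f|\rd|^2=|\nabla\rd|^2+2\lambda|\rd|^2-2R_{ikjl}\rd^{kl}\rd^{ij}-\tfrac12 R\,|\rd|^2 .
\]
The crucial point is a pointwise algebraic estimate of the curvature term under the hypothesis $Sec\geq\frac{1}{24}R$. Diagonalizing $\rd$ in an orthonormal frame with eigenvalues $t_i$ (so $\sum_i t_i=0$, $\sum_i t_i^2=|\rd|^2$), one has the exact identity $R_{ikjl}\rd^{kl}\rd^{ij}=\sum_{i\neq k}Sec_{ik}\,t_i t_k$. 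Writing $Sec_{ik}=\tfrac{1}{24}R+(Sec_{ik}-\tfrac{1}{24}R)$, using $\sum_{i\neq k}t_it_k=-|\rd|^2$, the nonnegativity $Sec_{ik}-\tfrac1{24}R\geq0$, the bound $t_it_k\leq\tfrac12(t_i^2+t_k^2)$, and the identity $\sum_{k\neq i}Sec_{ik}=R_{ii}$, one arrives at
\[
R_{ikjl}\rd^{kl}\rd^{ij}\leq \mathrm{tr}(\rd^3)+\tfrac{1}{12}R\,|\rd|^2 .
\]
Here the constant $\tfrac{1}{24}$ is exactly calibrated: it is the value for which $R_{ii}-3\cdot\tfrac{1}{24}R=t_i+\tfrac18 R$, which is what makes the cubic term reassemble cleanly. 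Substituting back yields the differential inequality $\tfrac12\Delta_f|\rd|^2\geq|\nabla\rd|^2+2\lambda|\rd|^2-2\,\mathrm{tr}(\rd^3)-\tfrac23 R\,|\rd|^2$.

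To conclude $\rd\equiv 0$ I would integrate this inequality against the weight $e^{-f}$ (so that $\int_M \Delta_f(\,\cdot\,)\,e^{-f}=0$) and feed in the auxiliary integral identity $\lambda\int_M R\,e^{-f}=\int_M|\rd|^2 e^{-f}+\tfrac14\int_M R^2 e^{-f}$ obtained from the trace equation, together with the sharp pinching inequality $|\mathrm{tr}(\rd^3)|\leq\tfrac{1}{\sqrt3}|\rd|^3$ for traceless symmetric tensors in dimension four. \emph{This is the step I expect to be the main obstacle}: the indefinite cubic term $\mathrm{tr}(\rd^3)$ and, above all, the mismatch between the constant $\lambda$ and the pointwise scalar curvature $R$ must be reconciled, since the favorable term $2\lambda|\rd|^2$ cannot be compared to $\tfrac23 R|\rd|^2$ without an a priori bound of the form $R\leq c\lambda$. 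Producing such a bound (or replacing the naive maximum/integral argument by a more refined manipulation of the soliton structure, for instance exploiting $\rd=-\,\mathring{\nabla^2 f}$) is precisely where the sharp threshold $\tfrac1{24}$ is consumed. As an independent check on the target list, one may also observe that $Sec\geq\tfrac1{24}R$ forces weakly $\tfrac14$-pinched curvature, hence nonnegative isotropic curvature, so that the classification of Li--Ni--Wang \cite{lnw} applies and positivity of $Sec$ rules out the splitting cases.

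Once $\rd\equiv 0$ is established, $R_{ij}=\tfrac14 R\,g_{ij}$ and the contracted second Bianchi identity force $\nabla R=0$, so $R$ is constant and $(M^4,g)$ is Einstein with constant potential $f$. Finally, since $\tfrac1{24}>\tfrac1{48}$, the Einstein rigidity theorem of Ribeiro \cite{rib} (building on Yang \cite{yang}, Costa \cite{costa}, and Berger's pinching \cite{berger}) identifies $(M^4,g)$ with $\SS^4$, $\mathbb{RP}^4$, or $\mathbb{CP}^2$; together with the flat case this yields the stated list.
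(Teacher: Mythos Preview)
Your outline correctly isolates the degenerate case, the compactness reduction via Munteanu--Wang, the Bochner formula for $|\rd|^{2}$, and your pointwise estimate $R_{ikjl}\rd^{kl}\rd^{ij}\le \mathrm{tr}(\rd^{3})+\tfrac{1}{12}R|\rd|^{2}$ is exactly the first inequality of the paper's Proposition~2.1 specialized to $\eps=\tfrac{1}{24}$. But the step you yourself flag as ``the main obstacle'' is a genuine gap, and the tools you propose for it (weighted integration against $e^{-f}$, the Okumura bound $|\mathrm{tr}(\rd^{3})|\le\tfrac{1}{\sqrt3}|\rd|^{3}$, and the scalar identity $\lambda\int R\,e^{-f}=\int|\rd|^{2}e^{-f}+\tfrac14\int R^{2}e^{-f}$) do not close it: they leave you with an inequality of the form $0\ge\int\big(|\nabla\rd|^{2}+2\lambda|\rd|^{2}-\tfrac{2}{\sqrt3}|\rd|^{3}-\tfrac23 R|\rd|^{2}\big)e^{-f}$, in which neither the sign of the cubic term nor the comparison between $\lambda$ and $R$ is under control.

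The paper resolves both issues by two moves you are missing. First, it integrates against the \emph{unweighted} volume $dV$ rather than $e^{-f}dV$: because $\int_M\Delta_f u\,dV=\int_M u\,\Delta f\,dV=\int_M u(4\lambda-R)\,dV$, the terms $2\lambda|\rd|^{2}$ and $-\tfrac12 R|\rd|^{2}$ cancel exactly and one obtains the clean identity $\int|\nabla\rd|^{2}\,dV=2\int R_{ijkl}\rdc_{ik}\rdc_{jl}\,dV$, eliminating the $\lambda$--versus--$R$ mismatch you were worried about. Second, to dispose of the cubic term $\mathrm{tr}(\rd^{3})$ the paper does \emph{not} use Okumura; instead it (i) proves a \emph{second} algebraic inequality $R_{ijkl}\rdc_{ik}\rdc_{jl}\le\tfrac{1}{12}R|\rd|^{2}-3\,\mathrm{tr}(\rd^{3})$, so that a convex combination places any desired coefficient in front of $\mathrm{tr}(\rd^{3})$, and (ii) introduces a symmetrized three--tensor $F_{ijk}=\nabla_{k}\rdc_{ij}+\nabla_{j}\rdc_{ik}+\nabla_{i}\rdc_{jk}-\tfrac{1}{12}(\nabla_k R\,g_{ij}+\nabla_j R\,g_{ik}+\nabla_i R\,g_{jk})$ whose norm, via the four--dimensional identity $\int|\nabla R|^{2}dV=2\int R|\rd|^{2}dV$ and commutation of derivatives, rewrites $\int|\nabla\rd|^{2}dV$ so that all cubic terms cancel and one is left with $0\ge\tfrac13\int|F|^{2}dV+\tfrac{48\eps-2}{3}\int R|\rd|^{2}dV$. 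At $\eps=\tfrac{1}{24}$ this forces $F\equiv0$ plus equality in the curvature estimates, and a short divergence computation then gives $\rd\equiv0$.

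Your parenthetical alternative---$Sec\ge\tfrac{1}{24}R\Rightarrow$ weak $\tfrac14$--pinching $\Rightarrow$ nonnegative isotropic curvature $\Rightarrow$ Li--Ni--Wang---is valid and the paper acknowledges it, but the whole point of the paper is to give a self--contained elliptic argument that does not route through the Ricci flow or the isotropic--curvature classification; so while it salvages the theorem, it bypasses rather than completes your main line.
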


Note that, by the work of S. Brendle and R. Schoen \cite{bs}, using the Ricci flow, one can show that compact Ricci shrinkers with weakly $\frac14$-pinched sectional curvature are isometric to $\SS^{4}$, $\mathbb{RP}^{4}$ or $\mathbb{CP}^2$ with their standard metrics. The condition $Sec\geq\frac{1}{24} R$ is a little stronger, but the proof of Theorem \ref{t-main} that we present is completely ``elliptic''.

\

\section{Estimates on manifolds with positive sectional curvature} 

To fix the notation we recall that the Riemann curvature operator of a Riemannian manifold $(M^n,g)$ is defined as in~\cite{gahula} by
$$
Rm(X,Y)Z=\nabla_{Y}\nabla_{X}Z-\nabla_{X}\nabla_{Y}Z+\nabla_{[X,Y]}Z\,.
$$ 
In a local coordinate system the components of the $(3,1)$--Riemann curvature tensor are given by
$R^{l}_{ijk}\frac{\partial}{\partial
  x^{l}}=Rm\big(\frac{\partial}{\partial
  x^{i}},\frac{\partial}{\partial
  x^{j}}\big)\frac{\partial}{\partial x^{k}}$ and we denote by
$R_{ijkl}=g_{lp}R^{p}_{ijk}$ its $(4,0)$--version. Throughout the paper the Einstein convention of summing over the repeated indices will be adopted. The Ricci tensor $Ric$ is obtained by the contraction $(Ric)_{ik}=R_{ik}=g^{jl}R_{ijkl}$, $R=g^{ik}R_{ik}$ will denote the scalar curvature and $(\rd)_{ik}=R_{ik}-\frac{1}{n}R\, g_{ik}$ the traceless Ricci tensor. The Riemannian metric induces norms on all the tensor bundles, in coordinates this norm is given, for a tensor $T=T_{i_{1}\dots i_{k}}^{j_{1}\dots j_{l}}$, by
$$
|T|^{2}_{g}=g^{i_{1}m_{1}}\cdots g^{i_{k}m_{k}}
 g_{j_{1}n_{1}}\dots g_{j_{l}n_{l}} T_{i_{1}\dots i_{k}}^{j_{1}\dots j_{l}} T_{m_{1}\dots m_{k}}^{n_{1}\dots n_{l}}\,.
$$

The first key observation are the following pointwise estimates which are satisfied by every metric with $Sec \geq \eps R$ for some $\eps \in \RR$. 
\begin{prop}\label{p-est}
Let $(M^{n},g)$ be a Riemannian manifold of dimension $n\geq 3$. If the sectional curvature satisfies $Sec\geq \e R$ for some $\e\in\RR$, then the following two estimates hold
$$
R_{ijkl} \rdc_{ik} \rdc_{jl} \leq \frac{1-n^{2}\e}{n} R |\rd|^{2} +\rdc_{ij}\rdc_{ik}\rdc_{jk}
$$
and
$$
R_{ijkl} \rdc_{ik} \rdc_{jl} \leq \frac{n^{2}-4n+2-n^{2}(n-2)(n-3)\e}{2n} R |\rd|^{2} -(n-1)\rdc_{ij}\rdc_{ik}\rdc_{jk}
 \,.
$$
In particular, in dimension four
$$
R_{ijkl} \rdc_{ik} \rdc_{jl} \leq \frac{1-16\e}{4} R |\rd|^{2} +\rdc_{ij}\rdc_{ik}\rdc_{jk}
 \,,
$$
$$
R_{ijkl} \rdc_{ik} \rdc_{jl} \leq \frac{1-16\e}{4} R |\rd|^{2} -3\rdc_{ij}\rdc_{ik}\rdc_{jk}
 \,.
$$

\end{prop}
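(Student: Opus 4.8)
The plan is to argue pointwise. Fix $p\in M^n$ and choose an orthonormal frame $\{e_i\}$ diagonalizing the traceless Ricci tensor, so that $\rdc_{ij}=\mu_i\delta_{ij}$ with $\sum_i\mu_i=0$ and $\sum_i\mu_i^2=|\rd|^2$. In this frame the two sides become purely algebraic in the eigenvalues $\mu_i$ and the sectional curvatures $K_{ij}:=R_{ijij}$: indeed $R_{ijkl}\rdc_{ik}\rdc_{jl}=\sum_{i\neq j}K_{ij}\mu_i\mu_j$ and $\rdc_{ij}\rdc_{ik}\rdc_{jk}=\sum_i\mu_i^3$. The hypothesis reads $K_{ij}\ge\e R$, while tracing the curvature tensor gives the identities $\sum_{j\neq i}K_{ij}=R_{ii}=\frac Rn+\mu_i$ and $\sum_{i<j}K_{ij}=\frac R2$. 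Bounding the remaining $n-2$ sectional curvatures in the $i$-th row below by $\e R$ then yields the complementary upper bound $K_{ij}\le\frac Rn+\mu_i-(n-2)\e R$.

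The key is the exact identity
\[
\sum_{i\neq j}K_{ij}\mu_i\mu_j=\frac Rn|\rd|^2+\sum_i\mu_i^3-\sum_{i<j}K_{ij}(\mu_i-\mu_j)^2,
\]
obtained by writing $2\mu_i\mu_j=\mu_i^2+\mu_j^2-(\mu_i-\mu_j)^2$ and evaluating $\frac12\sum_{i\neq j}K_{ij}(\mu_i^2+\mu_j^2)=\sum_i(\frac Rn+\mu_i)\mu_i^2$ through the row-sum identity. For the first estimate I would bound the last sum below by $\e R$, using $K_{ij}\ge\e R$ together with $\sum_{i<j}(\mu_i-\mu_j)^2=n|\rd|^2$; this gives $\sum_{i<j}K_{ij}(\mu_i-\mu_j)^2\ge n\e R|\rd|^2$, and substituting produces exactly the first inequality, the constant $\frac{1-n^2\e}{n}$ arising as $\frac1n-n\e$.

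For the second estimate the cubic term must appear with coefficient $-(n-1)$, so the crude bound $K_{ij}\ge\e R$ in the same identity no longer suffices and one must invoke the derived upper bounds $K_{ij}\le\frac Rn+\mu_i-(n-2)\e R$. Concretely, the second inequality is equivalent, via the identity above, to the sharper lower bound
\[
\sum_{i<j}K_{ij}(\mu_i-\mu_j)^2\ge\frac{4-n}{2}R|\rd|^2+\frac{n(n-2)(n-3)}{2}\e R|\rd|^2+n\sum_i\mu_i^3.
\]
The mechanism is that, although a single sectional curvature may absorb almost the whole row budget, the symmetry $K_{ij}=K_{ji}$ couples the rows, so one cannot simultaneously concentrate all the curvature on pairs with $\mu_i$ close to $\mu_j$; the forced excess on pairs with large $(\mu_i-\mu_j)^2$ is precisely what generates the extra $n\sum_i\mu_i^3$ term.

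Establishing this last bound is the main obstacle. It is a constrained extremal (transportation/linear-programming) estimate for the symmetric array $(K_{ij})$ with prescribed row sums $\frac Rn+\mu_i$ and two-sided bounds $\e R\le K_{ij}\le\frac Rn+\mu_i-(n-2)\e R$, rather than a term-by-term inequality, and it genuinely uses the coupling across rows. I expect the sharp constants to be read off from the extremal configuration in which the inner sectional curvatures saturate the lower bound $\e R$; this configuration is forced in dimension three, where the curvature is determined by the Ricci tensor, which explains why the second inequality holds there with equality and provides a convenient check on the constants.
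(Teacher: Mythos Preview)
Your treatment of the first inequality is correct and coincides with the paper's proof: both rest on the identity
\[
R_{ijkl}\rdc_{ik}\rdc_{jl}-R_{ij}\rdc_{ik}\rdc_{jk}=-\sum_{i<j}K_{ij}(\mu_i-\mu_j)^2
\]
together with $K_{ij}\ge\e R$ and $\sum_{i<j}(\mu_i-\mu_j)^2=n|\rd|^2$.

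The second inequality, however, is not proved. You correctly reduce it to a sharper lower bound on $\sum_{i<j}K_{ij}(\mu_i-\mu_j)^2$, but then stop, calling the residual problem ``a constrained extremal (transportation/linear-programming) estimate'' and writing that you ``expect the sharp constants to be read off from the extremal configuration''. That is a plan, not an argument: you neither exhibit the extremizer nor verify the bound, and the heuristic about row-coupling does not by itself force the cubic term $n\sum_i\mu_i^3$ with the right coefficient. The paper bypasses any optimization over the $K_{ij}$. The missing ingredient is an elementary Cauchy--Schwarz bound on the $n-2$ complementary eigenvalues: since $\sum_{k\neq i,j}\mu_k=-(\mu_i+\mu_j)$,
\[
\sum_{k\neq i,j}\mu_k^2 \ \ge\ \frac{(\mu_i+\mu_j)^2}{n-2},
\]
which rearranges, for every pair $i<j$, to the pointwise inequality
\[
2\mu_i\mu_j - \tfrac{n-2}{n}\,|\rd|^2 \ \le\ -\tfrac{n-1}{n}\,(\mu_i-\mu_j)^2.
\]
Multiplying by $K_{ij}-\e R\ge 0$, summing over $i<j$, and combining with your own identity (which already encodes the row sums) gives the second estimate with the stated constants; the derived upper bounds $K_{ij}\le\frac{R}{n}+\mu_i-(n-2)\e R$ that you introduce are never needed. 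Your remark that equality holds in dimension three is correct and is explained precisely by the fact that this Cauchy--Schwarz step is vacuously an equality when only one complementary index remains.
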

\begin{proof} Let $\{e_{i}\}$, $i=1,\ldots, n$, be the eigenvectors of $\rd$ and let $\lambda_{i}$ be the corresponding eigenvalues. Moreover, let $\sigma_{ij}$ be the sectional curvature defined by the two-plane spanned by $e_{i}$ and $e_{j}$. Since the sectional curvature satisfy $Sec \geq \e R$, it is natural to define the tensor
$$
\overline{Rm} = Rm - \frac{\e}{2}R\, g\kn g \,.
$$
In particular
$$
\overline{Ric}=Ric-(n-1)\e R\,g\,,\quad\quad \overline{R}=\big(1-n(n-1)\e\big) R \quad\quad\hbox{and}\quad \quad \overline{\sigma}_{ij}=\sigma_{ij}-\e R \geq 0\,.
$$
Moreover, if  $\mu_{k}$ and $\overline{\mu}_{k}$ are the eigenvalues with eigenvector $e_{k}$ of $Ric$ and $\overline{Ric}$, respectively, one has
$$
\mu_{k} = \sum_{i\neq k} \sigma_{ik} \quad\quad\hbox{and}\quad\quad \overline{\mu}_{k} = \sum_{i\neq k} \overline{\sigma}_{ik} \,.
$$
Denoting by $\overline{R}_{ijkl}$ the components of $\overline{Rm}$, we get
\begin{align}\label{equno}
\overline{R}_{ijkl} \rdc_{ik} \rdc_{jl}  - \overline{R}_{ij}\rdc_{ik}\rdc_{jk} &= \sum_{i,j=1}^{n} \lambda_{i}\lambda_{j} \overline{\sigma}_{ij} - \sum_{k=1}^{n}\mu_{k}\lambda_{k}^{2} \nonumber\\
&= 2\sum_{i<j} \lambda_{i}\lambda_{j} \overline{\sigma}_{ij} - \sum_{i<j}\big(\lambda_{i}^{2}+\lambda_{j}^{2}\big)\overline{\sigma}_{ij} \nonumber\\
&= - \sum_{i<j}\big(\lambda_{i}-\lambda_{j}\big)^{2}\overline{\sigma}_{ij} \leq 0 \,.
\end{align}
Using the definition of $\overline{Rm}$ and $\overline{Ric}$, we obtain
$$
R_{ijkl} \rdc_{ik} \rdc_{jl} + \e R |\rd|^{2} \leq R_{ij}\rdc_{ik}\rdc_{jk} -(n-1)\e R |\rd|^{2} = \rdc_{ij}\rdc_{ik}\rdc_{jk}+\frac{1-n(n-1)\e}{n} R |\rd|^{2}
$$
and this proves the first inequality of this proposition. 

\

\noindent In order to show the second one, we will follow the proof of \cite[Proposition 3.1]{cat}. We observe that
$$
\overline{R}_{ikjl} \rdc_{ij} \rdc_{kl}  - \frac{n-2}{2n} \overline{R}|\rd|^{2} \,=\, \sum_{i,j=1}^{n} \lambda_{i}\lambda_{j} \overline{\sigma}_{ij} - \frac{n-2}{2n} \overline{R} \sum_{k=1}^{n} \lambda_{k}^{2} \,.
$$
Since the modified scalar curvature $\overline{R}$ can be written as
$$
\overline{R} \,=\, g^{ij}g^{kl}\overline{R}_{ikjl} \,=\, \sum_{i,j=1}^{n} \overline{\sigma}_{ij} \,=\, 2 \sum_{i<j}\overline{\sigma}_{ij}\,,
$$
one has the following
\begin{align*} 
\sum_{i,j=1}^{n} \lambda_{i}\lambda_{j} \overline{\sigma}_{ij} - \frac{n-2}{2n}  \overline{R} \sum_{k=1}^{n} \lambda_{k}^{2} &= 2 \sum_{i<j}\lambda_{i}\lambda_{j} \overline{\sigma}_{ij} - \frac{n-2}{n}  \sum_{i<j} \overline{\sigma}_{ij} \sum_{k=1}^{n} \lambda_{k}^{2} \\
&= \sum_{i<j} \Big(2\lambda_{i}\lambda_{j} -\frac{n-2}{n}\sum_{k=1}^{n} \lambda_{k}^{2} \Big) \overline{\sigma}_{ij}\,. 
\end{align*}
On the other hand, one has 
$$
\sum_{k=1}^{n} \lambda_{k}^{2} \,=\, \lambda_{i}^{2} + \lambda_{j}^{2} + \sum_{k\neq i,j}\lambda_{k}^{2}\,.
$$
Moreover, using the Cauchy-Schwarz inequality and the fact that $\sum_{k=1}^{n} \lambda_{k} =0$, we obtain
$$
\sum_{k\neq i,j}\lambda_{k}^{2} \,\geq \, \frac{1}{n-2} \Big( \sum_{k\neq i,j}\lambda_{k}\Big)^{2} \,=\, \frac{1}{n-2} \big( \lambda_{i} + \lambda_{j} \big)^{2} 
$$
with equality if and only if $\lambda_{k}=\lambda_{k'}$ for every $k,k'\neq i,j$. Hence, the following estimate holds 
$$
\sum_{k=1}^{n} \lambda_{k}^{2} \,\geq \, \frac{n-1}{n-2} \big(\lambda_{i}^{2} + \lambda_{j}^{2} \big) + \frac{2}{n-2} \lambda_{i} \lambda_{j}\,.
$$
Using this, since $\overline{\sigma}_{ij} \geq 0$, it follows that
\begin{align*} 
\sum_{i,j=1}^{n} \lambda_{i}\lambda_{j} \overline{\sigma}_{ij} - \frac{n-2}{2n} \overline{R} \sum_{k=1}^{n} \lambda_{k}^{2} &\leq \frac{n-1}{n}\sum_{i<j} \big(2\lambda_{i}\lambda_{j} -\big(\lambda_{i}^{2} + \lambda_{j}^{2} \big) \Big) \overline{\sigma}_{ij} \\
&=- \frac{n-1}{n}\sum_{i<j} (\lambda_{i}-\lambda_{j})^{2} \overline{\sigma}_{ij} \\
&= \frac{n-1}{n}\Big( \overline{R}_{ijkl} \rdc_{ik} \rdc_{jl}  - \overline{R}_{ij}\rdc_{ik}\rdc_{jk} \Big)\,,
\end{align*}
where in the last equality we have used equation \eqref{equno}. Hence, we proved
$$
\overline{R}_{ikjl} \rdc_{ij} \rdc_{kl}  - \frac{n-2}{2n} \overline{R}|\rd|^{2} \leq \frac{n-1}{n}\Big( \overline{R}_{ijkl} \rdc_{ik} \rdc_{jl}  - \overline{R}_{ij}\rdc_{ik}\rdc_{jk} \Big) \,,
$$
i.e.
$$
\overline{R}_{ikjl} \rdc_{ij} \rdc_{kl} \leq \frac{n-2}{2} \overline{R}|\rd|^{2} - (n-1) \overline{R}_{ij}\rdc_{ik}\rdc_{jk} \,.
$$
Finally, substituting  $\overline{Rm}$, $\overline{Ric}$ and $\overline{R}$ we obtain the the second inequality of this proposition.

\end{proof}

Taking the convex combination of the two previous estimates we obtain the following.

\begin{cor}\label{c-est} Let $(M^{n},g)$ be a Riemannian manifold of dimension $n\geq 3$. If the sectional curvature satisfies $Sec\geq \e R$ for some $\e\in\RR$, then, for every $s\in[0,1]$, one has
\begin{align*}
R_{ijkl} \rdc_{ik} \rdc_{jl} \leq& \left(\frac{n^{2}-4n+2-n^{2}(n-2)(n-3)\e}{2n}-\frac{n-4}{2}\big(1-n(n-1)\e\big)s\right) R |\rd|^{2}
\\&\, -(n-1-ns)\rdc_{ij}\rdc_{ik}\rdc_{jk}
 \,.
\end{align*}
In particular, in dimension four, for every $s\in[0,1]$, one has
$$
R_{ijkl} \rdc_{ik} \rdc_{jl} \leq \frac{1-16\e}{4} R |\rd|^{2} -(3-4s)\rdc_{ij}\rdc_{ik}\rdc_{jk}
 \,,
$$
\end{cor}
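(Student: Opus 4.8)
The plan is to obtain the stated inequality as a direct convex combination of the two estimates proved in Proposition~\ref{p-est}. Writing, for brevity, $A = R_{ijkl}\rdc_{ik}\rdc_{jl}$ for the common left-hand side, $P = R\,|\rd|^{2}$, and $Q = \rdc_{ij}\rdc_{ik}\rdc_{jk}$ for the cubic term, the two inequalities of the proposition read
$$A \le \frac{1-n^{2}\e}{n}\,P + Q, \qquad A \le \frac{n^{2}-4n+2-n^{2}(n-2)(n-3)\e}{2n}\,P - (n-1)\,Q.$$
For a fixed $s\in[0,1]$ I would multiply the first inequality by $s$ and the second by $1-s$ — both weights being nonnegative, so the direction of each bound is preserved — and add the results. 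Since $A$ is the same on both left-hand sides, this bounds $A$ by the corresponding convex combination of the two right-hand sides.

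It then remains to simplify the two coefficients. The coefficient of $Q$ is immediate:
$$s\cdot 1 + (1-s)\cdot\big(-(n-1)\big) = ns - (n-1) = -(n-1-ns),$$
which is exactly the cubic coefficient in the statement. The coefficient of $P$ is
$$s\,\frac{1-n^{2}\e}{n} + (1-s)\,\frac{n^{2}-4n+2-n^{2}(n-2)(n-3)\e}{2n},$$
and I would rewrite it as the second (pure) coefficient plus $s$ times the difference of the first and the second coefficients. The only genuine computation is to verify that this difference equals $-\frac{n-4}{2}\big(1-n(n-1)\e\big)$; clearing the denominator $2n$, this reduces to the algebraic identity $(n-2)(n-3)-2 = (n-1)(n-4)$, after which the $\e$-dependent terms collect into the claimed factor. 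This is the single step requiring any care, though it is entirely routine and poses no real obstacle.

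Finally, for the dimension-four case I would simply substitute $n=4$. The factor $n-4$ then annihilates the $s$-dependent part of the $P$-coefficient, leaving
$$\frac{n^{2}-4n+2-n^{2}(n-2)(n-3)\e}{2n}\Big|_{n=4} = \frac{2-32\e}{8} = \frac{1-16\e}{4},$$
while the cubic coefficient becomes $-(3-4s)$, which yields precisely the stated four-dimensional inequality. Thus the whole content of the corollary is the bookkeeping of the convex combination together with the small identity above, and no step presents a substantial difficulty.
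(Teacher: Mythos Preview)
Your proposal is correct and follows exactly the approach indicated in the paper, which simply states that the corollary is obtained by taking the convex combination of the two estimates in Proposition~\ref{p-est}. Your verification of the coefficients, in particular the identity $(n-2)(n-3)-2=(n-1)(n-4)$ that makes the $P$-coefficient collapse to the stated form, is precisely the routine bookkeeping the paper omits.
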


\begin{rem} Taking $\eps=0$ and $s=\frac{n-1}{n}$, we recover the estimate on manifolds with non-negative sectional curvature which was proved in \cite{cat}.
\end{rem}

\

\section{Some formulas for Ricci solitons}

Let $(M^{n},g)$ be a $n$-dimensional complete gradient shrinking Ricci solitons 
$$
Ric + \nabla^{2} f \,=\, \lambda g 
$$
for some smooth function $f$ and some positive constant $\lambda>0$. First of all we recall the following well known formulas (for the proof see~\cite{mantemin2}) 

\begin{lemma}\label{formulas} Let $(M^{n},g)$ be a gradient Ricci soliton. Then the following formulas hold
\begin{equation*}\label{eq0}
\Delta f \,=\, n \lambda - R 
\end{equation*}
\begin{equation*}\label{eq1}
\Delta_{f} R \,= \, 2\lambda R-2|Ric|^2
\end{equation*}
\begin{align*}
\Delta_{f} R_{ik}
\,=\,&\,2\lambda R_{ik}-2R_{ijkl} R_{jl}\label{eq2}
\end{align*}
where the $\Delta_{f}$ denotes the $f$-Laplacian, $\Delta_{f}=\Delta-\nabla_{\nabla f}$.
\end{lemma}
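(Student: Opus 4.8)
The three identities all descend from the soliton equation $R_{ik}+\nabla_i\nabla_k f=\lambda g_{ik}$ together with the contracted second Bianchi identity and the commutation (Ricci) identities, and the plan is to prove them in the stated order, since each one feeds into the next. The first is immediate: contracting the soliton equation with $g^{ik}$ gives $R+\Delta f=n\lambda$, i.e. $\Delta f=n\lambda-R$.

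For the scalar formula I would first record the auxiliary identity $\nabla_k R=2R_{kl}\nabla^l f$. To obtain it, take the divergence of the soliton equation: since $\lambda g$ is parallel, $\nabla^i R_{ik}+\Delta\nabla_k f=0$. Substituting the contracted Bianchi identity $\nabla^i R_{ik}=\tfrac12\nabla_k R$, the Bochner commutation formula $\Delta\nabla_k f=\nabla_k\Delta f+R_{kl}\nabla^l f$, and $\nabla_k\Delta f=-\nabla_k R$ (from the first identity), the terms collapse to $-\tfrac12\nabla_k R+R_{kl}\nabla^l f=0$, which is the claim. Now differentiate once more: taking $\nabla^k$ of $\nabla_k R=2R_{kl}\nabla^l f$ and using $\nabla^k R_{kl}=\tfrac12\nabla_l R=R_{lm}\nabla^m f$ together with $\nabla^k\nabla^l f=\lambda g^{kl}-R^{kl}$ yields $\Delta R=2R_{lm}\nabla^l f\,\nabla^m f+2\lambda R-2|Ric|^2$. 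Since $\nabla_{\nabla f}R=\nabla^l f\,\nabla_l R=2R_{lm}\nabla^l f\,\nabla^m f$ by the auxiliary identity, subtracting gives $\Delta_f R=\Delta R-\nabla_{\nabla f}R=2\lambda R-2|Ric|^2$.

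For the Ricci formula the crucial intermediate step is Hamilton's soliton identity $\nabla_a R_{ik}-\nabla_i R_{ak}=R_{aikl}\nabla^l f$, in the curvature sign convention fixed in Section 2. I would derive it by differentiating the soliton equation as $\nabla_a R_{ik}=-\nabla_a\nabla_i\nabla_k f$, antisymmetrizing in the pair $(a,i)$, and applying the Ricci identity to the third covariant derivatives of $f$, whose commutator produces exactly the term $R_{aikl}\nabla^l f$. I would then take the divergence $\nabla^a$ of this identity. The left-hand side produces $\Delta R_{ik}$ and the term $\nabla^a\nabla_i R_{ak}$, which I rewrite by commuting derivatives and invoking $\nabla^a R_{ak}=\tfrac12\nabla_k R$; the right-hand side, using the second Bianchi identity for $\nabla^a R_{aikl}$ and the soliton equation for $\nabla^a\nabla^l f=\lambda g^{al}-R^{al}$, produces $\lambda R_{ik}$, a curvature contraction of the form $R_{ijkl}R_{jl}$, and a first-order term $R_{aikl}\nabla^a f\,\nabla^l f$ that combines with the commutator contributions to reproduce $\nabla_{\nabla f}R_{ik}$ after applying the auxiliary identities. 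Collecting all terms and moving the first-order piece across to form $\Delta_f R_{ik}$ gives the stated $\Delta_f R_{ik}=2\lambda R_{ik}-2R_{ijkl}R_{jl}$.

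The main obstacle is purely bookkeeping rather than conceptual: keeping the curvature sign convention of Section 2 consistent through every invocation of the Ricci identity and the second Bianchi identity, and tracking precisely which index pairs are symmetrized so that the commutator and Bianchi contributions coalesce into the single contraction $R_{ijkl}R_{jl}$ with the correct coefficient $-2$, and so that all first-order terms assemble into $\nabla_{\nabla f}R_{ik}$. As a consistency check I would specialize to the Einstein case $\nabla f$ parallel, where $\Delta_f$ reduces to $\Delta$ and all three identities must collapse to the known static Bianchi identities for $\Delta R_{ik}$ and $\Delta R$.
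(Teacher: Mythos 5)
Your proof is correct and follows essentially the same route as the paper, which gives no proof of Lemma \ref{formulas} itself but defers to \cite{mantemin2}, where the identities are derived exactly as you propose: tracing the soliton equation, obtaining $\nabla_k R=2R_{kl}\nabla^l f$ from the contracted Bianchi and Bochner commutation identities, and taking the divergence of Hamilton's identity $\nabla_a R_{ik}-\nabla_i R_{ak}=R_{aikl}\nabla^l f$. Your sign-convention caveat and the Einstein-case consistency check are sensible, and no step in your outline would fail.
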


In particular, defining $\rdc_{ij}=R_{ij}-\frac{1}{n}R\, g_{ij}$, a simple computation shows the following equation for the $f$-Laplacian of the squared norm of the trece-less Ricci tensor $\rd$
\begin{lemma}\label{l-eqric} Let $(M^{n},g)$ be a gradient Ricci soliton. Then the following formula holds
\begin{equation*}
\frac{1}{2}\Delta_{f} |\rd|^{2} \,=\, |\nabla \rd|^{2} + 2\lambda |\rd|^{2} -2R_{ijkl}\rdc_{ik}\rdc_{jl}-\frac{2}{n}R|\rd|^{2}\,.
\end{equation*}
\end{lemma}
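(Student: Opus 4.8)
The plan is to begin from the general Weitzenb\"ock-type identity for the drift Laplacian, which for any symmetric two-tensor $T$ reads
$$
\frac{1}{2}\Delta_{f} |T|^{2} = \langle \Delta_{f} T, T\rangle + |\nabla T|^{2}\,,
$$
a direct consequence of the rough-Laplacian identity $\frac{1}{2}\Delta|T|^{2} = \langle\Delta T,T\rangle + |\nabla T|^{2}$ combined with $\frac{1}{2}\nabla_{\nabla f}|T|^{2} = \langle\nabla_{\nabla f}T,T\rangle$. Applying this with $T=\rd$ reduces the whole statement to computing $\langle\Delta_{f}\rd,\rd\rangle$ and checking that it equals the claimed right-hand side with the gradient term stripped off.

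To evaluate $\Delta_{f}\rd$, I would differentiate the definition $\rdc_{ik}=R_{ik}-\frac{1}{n}R\,g_{ik}$. Since the metric is parallel one has $\Delta_{f}g=0$, hence $\Delta_{f}\rdc_{ik} = \Delta_{f} R_{ik} - \frac{1}{n}(\Delta_{f} R)\,g_{ik}$. Substituting the two evolution formulas from Lemma \ref{formulas}, namely $\Delta_{f} R_{ik} = 2\lambda R_{ik} - 2R_{ijkl}R_{jl}$ and $\Delta_{f} R = 2\lambda R - 2|Ric|^{2}$, and regrouping the $\lambda$-contributions back into $\rd$, yields
$$
\Delta_{f}\rdc_{ik} = 2\lambda\,\rdc_{ik} - 2R_{ijkl}R_{jl} + \frac{2}{n}|Ric|^{2}\, g_{ik}\,.
$$

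Next I would contract this against $\rdc_{ik}$. The term $\frac{2}{n}|Ric|^{2}\,g_{ik}\rdc_{ik}$ vanishes because $\rd$ is trace-free, so only two terms survive. For the curvature term I would write $R_{jl}=\rdc_{jl}+\frac{1}{n}R\,g_{jl}$: the first piece produces precisely $R_{ijkl}\rdc_{ik}\rdc_{jl}$, while the second produces $\frac{1}{n}R\,R_{ijkl}g_{jl}\rdc_{ik}=\frac{1}{n}R\,R_{ik}\rdc_{ik}=\frac{1}{n}R|\rd|^{2}$, using the contraction $g^{jl}R_{ijkl}=R_{ik}$ and trace-freeness once more. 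Combining these gives $\langle\Delta_{f}\rd,\rd\rangle = 2\lambda|\rd|^{2} - 2R_{ijkl}\rdc_{ik}\rdc_{jl} - \frac{2}{n}R|\rd|^{2}$, and the Bochner identity above then delivers exactly the stated formula.

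There is no genuine obstacle here, since the entire argument is a bookkeeping computation; the only point demanding care is the consistency of the curvature sign and index conventions fixed at the start of Section 2, which must be respected both when invoking $\Delta_{f} R_{ik} = 2\lambda R_{ik} - 2R_{ijkl}R_{jl}$ and when performing the contraction $g^{jl}R_{ijkl}=R_{ik}$. Tracking these conventions correctly is what guarantees that the two $\frac{1}{n}R|\rd|^{2}$ contributions combine and that the trace-free cancellations land with the signs written in the statement.
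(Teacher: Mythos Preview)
Your proposal is correct and is precisely the ``simple computation'' the paper alludes to without writing out: the paper gives no detailed proof of this lemma, merely stating that it follows from Lemma~\ref{formulas}, and your derivation via $\frac{1}{2}\Delta_{f}|T|^{2}=\langle\Delta_{f}T,T\rangle+|\nabla T|^{2}$ together with the decomposition $R_{jl}=\rdc_{jl}+\frac{1}{n}R\,g_{jl}$ is the natural way to fill in the details.
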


Moreover we have the following scalar curvature estimate \cite{pirise}.

\begin{lemma}\label{l-scal} Let $(M^{n},g)$ be a complete gradient shrinking Ricci soliton. Then either $g$ is flat or its scalar curvature is positive $R>0$.
\end{lemma}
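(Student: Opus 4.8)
The plan is to combine the evolution equation $\Delta_{f} R = 2\lambda R - 2|Ric|^{2}$ of Lemma~\ref{formulas} with the elementary trace inequality $|Ric|^{2}\geq \frac1n R^{2}$. Together these give the differential inequality $\Delta_{f} R \leq \frac{2}{n} R\,(n\lambda - R)$. The sign of the right-hand side is the crucial feature: wherever $R<0$ one has $n\lambda - R> n\lambda >0$, so $\Delta_{f}R<0$ there and $R$ is strictly $f$-superharmonic on the set $\{R<0\}$. Accordingly I would first establish $R\geq 0$ by a minimum principle, and then upgrade this to the stated dichotomy via the strong maximum principle.

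In the compact case the first step is immediate. At a point $p$ realizing $\min_{M}R$ one has $\nabla R(p)=0$, hence $\Delta_{f}R(p)=\Delta R(p)\geq 0$; comparing with the inequality above yields $R(p)\,(n\lambda-R(p))\geq 0$. Since $n\lambda>0$, the alternative $R(p)\leq 0 < n\lambda \leq R(p)$ is impossible, so $R(p)\geq 0$ and therefore $R\geq 0$ on all of $M$.

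The noncompact case is the heart of the matter and the step I expect to be the main obstacle, since $R$ need not attain its infimum. Here I would invoke a maximum principle at infinity for the drift Laplacian $\Delta_{f}$. This is available precisely because we are on a shrinker: the Bakry--\'Emery tensor satisfies $Ric_{f}=Ric+\nabla^{2}f=\lambda g>0$, the potential $f$ grows quadratically, and the weighted volume $\int_{M}e^{-f}\,dV_{g}$ is finite, which together guarantee the validity of the weak (Omori--Yau) maximum principle for $\Delta_{f}$. Applying it to $-R$ would produce a sequence $x_{k}$ along which $R(x_{k})\to\inf_{M}R$ and $\Delta_{f}R(x_{k})\geq -\frac1k$; passing to the limit in $-\frac1k\leq \Delta_{f}R(x_{k})\leq \frac{2}{n}R(x_{k})\,(n\lambda-R(x_{k}))$ forces $\inf_{M}R\,(n\lambda-\inf_{M}R)\geq 0$, and exactly as in the compact case this rules out $\inf_{M}R<0$. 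The technical cost lies entirely in justifying this principle (equivalently, the $f$-parabolicity of the shrinker) and in checking that $R$ is bounded below so that it applies.

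Finally, with $R\geq 0$ in hand, I would rewrite the evolution equation as $(\Delta_{f}-2\lambda)R=-2|Ric|^{2}\leq 0$, so that $R$ is a nonnegative supersolution of a Schr\"odinger-type operator with nonpositive zeroth order coefficient. If $R$ vanishes at some interior point, the strong maximum principle gives $R\equiv 0$; feeding this back into the equation yields $|Ric|^{2}\equiv 0$, so $(M^{n},g)$ is Ricci flat and the soliton equation collapses to $\nabla^{2}f=\lambda g$ with $\lambda>0$. By the classical rigidity for potentials satisfying such an equation (Tashiro's theorem), $(M^{n},g)$ is then flat. Hence either $g$ is flat or $R>0$ everywhere, as claimed.
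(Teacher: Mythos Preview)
The paper does not supply its own proof of this lemma; it simply cites \cite{pirise}. Your outline is correct and is essentially the argument of Pigola--Rimoldi--Setti in that reference: derive the differential inequality for $R$ from Lemma~\ref{formulas}, invoke the weak/Omori--Yau maximum principle for $\Delta_{f}$ (valid on shrinkers because $\Ric_{f}=\lambda g>0$ and the weighted volume is finite) to get $R\geq 0$, then use the strong maximum principle together with the rigidity of $\nabla^{2}f=\lambda g$ to obtain the dichotomy.
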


Finally, we show this simple identity.

\begin{lemma}\label{l-id} Let $(M^{n},g)$ be a compact gradient Ricci soliton. Then the following formula holds
$$
\int |\nabla R|^2\,dV = \frac{n-4}{2}\,\lambda\int R^{2}\,dV -\frac{n-4}{2n}\int R^{3}\,dV + 2 \int R |\rd|^{2}\,dV \,. 
$$
In particular, in dimension four
$$
\int |\nabla R|^2\,dV = 2 \int R |\rd|^{2}\,dV \,. 
$$
\end{lemma}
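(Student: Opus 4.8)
The plan is to integrate the evolution equation for the scalar curvature against $R$ itself. From Lemma~\ref{formulas} we have $\Delta_{f} R = 2\lambda R - 2|Ric|^{2}$, where $\Delta_{f}R = \Delta R - \langle\nabla f,\nabla R\rangle$. First I would multiply this identity by $R$ and integrate over the compact manifold with respect to $dV$. The Laplacian term yields $\int R\,\Delta R\,dV = -\int |\nabla R|^{2}\,dV$ after integration by parts (no boundary terms on a closed manifold), which is exactly the quantity we wish to isolate.

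The drift term is handled by rewriting $\int R\,\langle\nabla f,\nabla R\rangle\,dV = \tfrac12\int\langle\nabla f,\nabla R^{2}\rangle\,dV = -\tfrac12\int R^{2}\,\Delta f\,dV$, and then substituting the first formula of Lemma~\ref{formulas}, namely $\Delta f = n\lambda - R$. This produces the terms $\tfrac{n\lambda}{2}\int R^{2}\,dV - \tfrac12\int R^{3}\,dV$. On the right-hand side I would use the algebraic decomposition $|Ric|^{2} = |\rd|^{2} + \tfrac1n R^{2}$, which follows directly from $R_{ij} = \rdc_{ij} + \tfrac1n R\,g_{ij}$ together with $\tr\rd = 0$, so that $2\int R|Ric|^{2}\,dV = 2\int R|\rd|^{2}\,dV + \tfrac2n\int R^{3}\,dV$.

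Collecting all contributions and solving for $\int |\nabla R|^{2}\,dV$ then gives, term by term, the coefficient $\tfrac{n\lambda}{2} - 2\lambda = \tfrac{(n-4)\lambda}{2}$ in front of $\int R^{2}\,dV$, the coefficient $-\tfrac12 + \tfrac2n = -\tfrac{n-4}{2n}$ in front of $\int R^{3}\,dV$, and the coefficient $2$ in front of $\int R|\rd|^{2}\,dV$, which is precisely the claimed identity. The four-dimensional special case is then immediate: setting $n=4$ makes the first two terms vanish, leaving $\int |\nabla R|^{2}\,dV = 2\int R|\rd|^{2}\,dV$. This argument is entirely elementary; there is no genuine obstacle beyond careful bookkeeping of the integration by parts and consistent sign conventions for $\Delta_{f}$, and the main point to watch is that the compactness hypothesis is what licenses discarding the boundary terms.
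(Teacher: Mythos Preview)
Your argument is correct and essentially identical to the paper's: both multiply the identity $\Delta_{f}R = 2\lambda R - 2|Ric|^{2}$ by $R$, integrate by parts (handling the drift term via $\tfrac12\langle\nabla f,\nabla R^{2}\rangle$ and substituting $\Delta f = n\lambda - R$), and decompose $|Ric|^{2} = |\rd|^{2} + \tfrac1n R^{2}$. The bookkeeping of the coefficients matches exactly.
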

\begin{proof} Integrating by parts and using Lemma \ref{formulas} we obtain
\begin{align*}
\int |\nabla R|^{2}\,dV &= -\int R\Delta R\,dV \\
&= -\frac{1}{2}\int \langle \nabla R^{2}, \nabla f\rangle\,dV - 2\lambda \int R^{2}\,dV + 2 \int R |\rd|^{2}\,dV + \frac{2}{n} \int R^{3}\,dV  \\
&= \frac{1}{2}\int R^{2} \Delta f\,dV - 2\lambda \int R^{2}\,dV + 2 \int R |\rd|^{2}\,dV + \frac{2}{n} \int R^{3}\,dV \\
&= \frac{n-4}{2}\,\lambda\int R^{2}\,dV -\frac{n-4}{2n}\int R^{3}\,dV + 2 \int R |\rd|^{2}\,dV \,.
\end{align*}
\end{proof}

\

\section{Proof of Theorem \ref{t-main}}

Let $(M^4,g)$ be a complete gradient shrinking Ricci soliton of dimension four and assume that $Sec\geq \eps R$ on $M^4$ for some $\eps>0$. By Lemma \ref{l-scal} either $g$ is flat or $R>0$. In this second case, by the result in \cite{munwang} we know that $M^4$ must be compact. From now on we can assume that $(M^{4},g)$ is compact with $Sec\geq \eps R>0$. Lemma \ref{l-eqric} gives
\begin{align*}
\frac{1}{2}\Delta_{f} |\rd|^{2} &= |\nabla \rd|^{2} + 2\lambda |\rd|^{2} -2R_{ijkl}\rdc_{ik}\rdc_{jl}-\frac{1}{2}R|\rd|^{2}\,.
\end{align*}
Integrate over $M^{4}$ and using equation \eqref{eq0} we obtain
\begin{align}\label{est0} \nonumber
0 &= \frac{1}{2}\int\langle \nabla |\rd|^{2},\nabla f\rangle\,dV +\int|\nabla \rd|^{2}\,dV+ 2\int\lambda |\rd|^{2}\,dV\\\nonumber
&\quad-2\int R_{ijkl}\rdc_{ik}\rdc_{jl}\,dV-\frac{1}{2}\int R|\rd|^{2}\,dV\\
&= \int|\nabla \rd|^{2}\,dV-2\int R_{ijkl}\rdc_{ik}\rdc_{jl}\,dV \,. 
\end{align}
On the other hand, given $a_1,a_2,b_1,b_2,b_3\in\RR$ we define the three tensor
$$
F_{ijk}:=\nabla_k\rdc_{ij}+a_1 \nabla_j\rdc_{ik}+a_2 \nabla_i\rdc_{jk} + b_1 \nabla_k R g_{ij} + b_2 \nabla_j R g_{ik} + b_3 \nabla_i R g_{jk} \,.
$$
Using the Bianchi identity $\nabla_i \rdc_{ij} = \frac{1}{4} \nabla_j R$, a computation gives
\begin{align*}
&|F|^2 = (1+a_1^2+a_2^2)|\nabla \rd|^2 + 2(a_1+a_2+a_1a_2)\nabla_k \rdc_{ij} \nabla_j \rdc_{ik} \\
&+\frac{1}{2}\left( a_1 (b_1 +  b_3) + a_2 (b_1 + b_2) + b_2+b_3+8 ( b_1^2+b_2^2+b_3^2)+4 (b_1b_2+b_1b_3+b_2b_3)\right)|\nabla R|^2 \,.
\end{align*}
In particular, 
\begin{align}\label{eq-est1} \nonumber
&\int |\nabla \rd|^2\,dV = \frac{1}{1+a_1^2+a_2^2}\int|F|^{2}\,dV-\frac{2(a_1+a_2+a_1a_2)}{1+a_1^2+a_2^2}\int \nabla_k \rdc_{ij} \nabla_j \rdc_{ik}\,dV \\\nonumber
&-\frac{a_1 (b_1 +  b_3) + a_2 (b_1 + b_2) + b_2+b_3+8 ( b_1^2+b_2^2+b_3^2)+4 (b_1b_2+b_1b_3+b_2b_3)}{2(1+a_1^2+a_2^2)}\int|\nabla R|^2\,dV  \\
&\hspace{2.6cm}=\frac{1}{1+a_1^2+a_2^2}\int|F|^{2}\,dV-\frac{2(a_1+a_2+a_1a_2)}{1+a_1^2+a_2^2}\int \nabla_k \rdc_{ij} \nabla_j \rdc_{ik}\,dV \\\nonumber
&-\frac{a_1 (b_1 +  b_3) + a_2 (b_1 + b_2) + b_2+b_3+8 ( b_1^2+b_2^2+b_3^2)+4 (b_1b_2+b_1b_3+b_2b_3)}{1+a_1^2+a_2^2}\int R|\rd|^2\,dV \,,
\end{align}
where, in the last equality we have used Lemma \ref{l-id}. On the other hand, integrating by parts and commuting the covariant derivatives,  one has
\begin{align}\label{eq-est8}\nonumber
\int \nabla_{k} \rdc_{ij} \nabla_{j}\rdc_{ik}\,dV &= -\int \rdc_{ij} \nabla_{k} \nabla_{j} \rdc_{ik} \, dV\\\nonumber
&= - \int \Big( \rdc_{ij} \nabla_{j} \nabla_{k} \rdc_{ik} + R_{kjil}\rdc_{ij}\rdc_{kl} + R_{ij}\rdc_{ik}\rdc_{jl}\Big) dV \\\nonumber
&= - \int \left( \frac{1}{4}\rdc_{ij} \nabla_i \nabla_j R- R_{ijkl}\rdc_{ik}\rdc_{jl} + \rdc_{ij}\rdc_{ik}\rdc_{jl} + \frac{1}{4} R|\rd|^{2}\right) dV \\\nonumber
&=  \int \left( \frac{1}{16} |\nabla R|^{2} + R_{ijkl}\rdc_{ik}\rdc_{jl} - \rdc_{ij}\rdc_{ik}\rdc_{jl} - \frac{1}{4} R|\rd|^{2}\right) dV \\
&= \int \left( R_{ijkl}\rdc_{ik}\rdc_{jl} - \rdc_{ij}\rdc_{ik}\rdc_{jl} - \frac{1}{8} R|\rd|^{2}\right) dV \,.
\end{align}
From equation \eqref{eq-est1}, we obtain
\begin{align*}
\int |\nabla \rd|^2\,dV &=\frac{1}{1+a_1^2+a_2^2}\int|F|^{2}dV -\frac{2(a_1+a_2+a_1a_2)}{1+a_1^2+a_2^2}\int \left( R_{ijkl}\rdc_{ik}\rdc_{jl} - \rdc_{ij}\rdc_{ik}\rdc_{jl}\right)dV\\
&\quad +Q_1 \int R|\rd|^2\,dV 
\end{align*}
with
\begin{align*}
 Q_1&:=\frac{a_1+a_2+a_1a_2}{4(1+a_1^2+a_2^2)}\\
&-\frac{a_1 (b_1 +  b_3) + a_2 (b_1 + b_2) + b_2+b_3+8 ( b_1^2+b_2^2+b_3^2)+4 (b_1b_2+b_1b_3+b_2b_3)}{1+a_1^2+a_2^2} \,.
\end{align*}
Using this inequality in \eqref{est0}, we obtain that
\begin{align}\label{est2}
0 &=  \frac{1}{1+a_1^2+a_2^2}\int|F|^{2}dV -\frac{2(1+a_1^2+a_2^2+a_1+a_2+a_1a_2)}{1+a_1^2+a_2^2}\int R_{ijkl}\rdc_{ik}\rdc_{jl}\,dV \\\nonumber
&\quad + \frac{2(a_1+a_2+a_1a_2)}{1+a_1^2+a_2^2} \int \rdc_{ij}\rdc_{ik}\rdc_{jl}\,dV+Q_1 \int R|\rd|^2\,dV \,.
\end{align}
From Corollary \ref{c-est} we have
\begin{equation}\label{eq-estr}
R_{ijkl} \rdc_{ik} \rdc_{jl} \leq \frac{1-16\e}{4} R |\rd|^{2} -(3-4s)\rdc_{ij}\rdc_{ik}\rdc_{jk}
\end{equation}
for every $s\in[0,1]$. Thus, if $a_1+a_2+a_1a_2\geq 0$, for every $s\in[0,1]$,  estimate \eqref{est2} gives
\begin{align}\label{est3}\nonumber
0 &\geq \frac{1}{1+a_1^2+a_2^2}\int|F|^{2}dV\\
&\quad+ \frac{2\Big((3-4s)(1+a_1^2+a_2^2)+4(1-s)(a_1+a_2+a_1a_2)\Big)}{1+a_1^2+a_2^2} \int \rdc_{ij}\rdc_{ik}\rdc_{jl}\,dV\\
&\quad+Q_2 \int R|\rd|^2\,dV
\end{align}
with
\begin{align*}
 Q_2 &:= Q_1-\frac{(1-16\eps)(1+a_1^2+a_2^2+a_1+a_2+a_1a_2)}{2(1+a_1^2+a_2^2)}\\
 &=\frac{a_1+a_2+a_1a_2}{4(1+a_1^2+a_2^2)}-\frac{(1-16\eps)(1+a_1^2+a_2^2+a_1+a_2+a_1a_2)}{2(1+a_1^2+a_2^2)}\\
&-\frac{a_1 (b_1 +  b_3) + a_2 (b_1 + b_2) + b_2+b_3+8 ( b_1^2+b_2^2+b_3^2)+4 (b_1b_2+b_1b_3+b_2b_3)}{1+a_1^2+a_2^2} \,.
\end{align*}
Now, choose $a_1=a_2=1$ and $b_1=b_2=b_3=:b$. Then
$$
Q_2 = -12 b^2 -2b+16\eps-\frac{3}{4} \,.
$$
In particular, the maximum is attained at $b=-1/12$ and is given by
\begin{equation}\label{max}
Q_2= \frac{48\eps-2}{3}\,.
\end{equation}
Actually a (long) computation gives that the maximum of the function $Q_2$ defined for general variables $(a_1,a_2,b_1,b_2,b_3)$ is attained at the point
\begin{equation}\label{choice}
(a_1,a_2,b_1,b_2,b_3)=\left(1,1,-\frac{1}{12},-\frac{1}{12},-\frac{1}{12}\right)
\end{equation}
and is given by the value \eqref{max}. Moreover, under the choice \eqref{choice}, one has
$$
\frac{2\Big((3-4s)(1+a_1^2+a_2^2)+4(1-s)(a_1+a_2+a_1a_2)\Big)}{1+a_1^2+a_2^2} = 2(7-8s)\,.
$$
In particular, choosing
$$
s=\frac{7}{8} \,,
$$
from \eqref{est3} we obtain
$$
0 \geq \frac{1}{3}\int|F|^{2}dV+\frac{48\eps-2}{3} \int R|\rd|^2\,dV \,.
$$
Thus, if $\eps>1/24$, then $\rd\equiv 0$, i.e. $(M^4,g)$ is Einstein. By Berger classification result \cite{berger} we conclude the proof of Theorem \ref{t-main} in this case. 

If $\eps=1/24$, then $Q_{1}=1/3$, $Q_{2}=0$ and all previous inequalities become equalities. In particular, $F\equiv 0$. Moreover, from \eqref{eq-estr}, we get 
\begin{equation}\label{eq127a}
R_{ijkl} \rdc_{ik} \rdc_{jl} \equiv \frac{1}{12} R |\rd|^{2} \quad\text{and}\quad \rdc_{ij}\rdc_{ik}\rdc_{jk}\equiv 0 \,.
\end{equation}
From equation \eqref{eq-est8} and Lemma \ref{l-id} we get
$$
\int \nabla_{k} \rdc_{ij} \nabla_{j}\rdc_{ik}\,dV = -\frac{1}{24}\int R|\rd|^{2}dV =  -\frac{1}{48}\int|\nabla R|^{2}dV.
$$
Thus, equation \eqref{eq-est1} gives
\begin{equation}\label{eq127b}
\int|\nabla \rd|^{2}dV = \frac{1}{12}\int|\nabla R|^{2}dV.
\end{equation}
Now, to conclude, we have to use the fact that $F\equiv 0$, i.e.
$$
0 = \nabla_k\rdc_{ij}+\nabla_j\rdc_{ik}+ \nabla_i\rdc_{jk} -\frac{1}{12}\left(\nabla_k R g_{ij} + \nabla_j R g_{ik} + \nabla_i R g_{jk}\right) \,.
$$
Taking the diverge in $k$ and contracting with $\rdc_{ij}$, we obtain
\begin{align*}
0 &= \rdc_{ij}\left[\Delta \rdc_{ij}+\nabla_{k}\nabla_j\rdc_{ik}+ \nabla_{k}\nabla_i\rdc_{jk} -\frac{1}{12}\left(\Delta R g_{ij} + 2\nabla_{i}\nabla_j R \right)\right]\\
&= \frac{1}{2}\Delta |\rd|^{2}-|\nabla \rd|^{2}+\rdc_{ij}\left[\nabla_{j}\nabla_k\rdc_{ik}+ \nabla_{i}\nabla_k\rdc_{jk} -\frac{1}{6}\nabla_{i}\nabla_j R \right]-2R_{ijkl}\rdc_{ik}\rdc_{jl}\\
&=\frac{1}{2}\Delta |\rd|^{2}-|\nabla \rd|^{2}+\frac13\rdc_{ij}\nabla_{i}\nabla_j R -2R_{ijkl}\rdc_{ik}\rdc_{jl}\\
&=\frac{1}{2}\Delta |\rd|^{2}-|\nabla \rd|^{2}+\frac13\rdc_{ij}\nabla_{i}\nabla_j R -\frac16R|\rd|^{2} \,,
\end{align*}
where we used \eqref{eq127a}. Integrating by parts over $M$, using \eqref{eq127b}, we obtain
$$
0 = -\frac16\int|\nabla R|^{2}dV-\frac16\int R|\rd|^{2}dV  
$$
which implies $\rd\equiv 0$, i.e. $(M^4,g)$ is Einstein and the thesis follows again by Berger result. This concludes the proof of Theorem \ref{t-main}.

\

\begin{ackn}
\noindent The author is member of the Gruppo Nazionale per
l'Analisi Matematica, la Probabilit\`{a} e le loro Applicazioni (GNAMPA) of the Istituto Nazionale di Alta Matematica (INdAM).
\end{ackn}

\

\

\bibliographystyle{amsplain}

\

\

\parindent=0pt

\end{document}